\newtheorem{lemma}{Lemma}[section]
\newaliascnt{proposition}{lemma}
\newtheorem{proposition}[proposition]{Proposition}
\newaliascnt{corollary}{lemma}
\newaliascnt{theorem}{lemma}
\newtheorem{theorem}[theorem]{Theorem}
\newtheorem{example}[theorem]{Example}
\newtheorem{proof}{Proof}
\def\fin{\ifmmode{\Large$\diamond$}\else{\unskip\nobreak\hfil
    \penalty50\hskip1em\null\nobreak\hfil{\Large$\diamond$}
    \parfillskip=0pt\finalhyphendemerits=0\endgraf}\fi}
\mathchardef\Gamma="0100
\mathchardef\Xi="0104
\def\be#1#2\ee{\begin{equation}\label{eq:#1}#2\end{equation}}
\def\req#1{{\rm(\ref{eq:#1})}}
\def\bdm  {\begin{displaymath}}
\def\edm  {\end{displaymath}}
\def\bdmal{\begin{displaymath}\begin{aligned}}
\def\edmal{\end{aligned}\end{displaymath}}
\newcommand{\N}{{\mathord{\mathbb N}}}
\newcommand{\R}{{\mathord{\mathbb R}}}
\newcommand{\X}{{\cal X}}
\newcommand{\Y}{{\cal Y}}
\newcommand{\E}{{\cal E}}
\newcommand{\xlsp}{x_n^{I}}
\newcommand{\xdls}{x_n^{I\!I}}
\newcommand{\xbayes}{x_n^{I\!I\!I}}
\newcommand{\mean}{x^\circ}
\newcommand{\Mean}{X^\circ}
\newcommand{\ymean}{y^\circ}
\newcommand{\xx}{{{\boldsymbol\xi}}}
\newcommand{\norm}[1]{\|#1\|}
\newcommand{\scalp}[1]{\langle\,#1\,\rangle}
\newcommand{\hull}{\operatorname{span}}
\newcommand{\rge}[1]{{\cal R}(#1)}
\newcommand{\yd}{y}
\newcommand{\xdagger}{x^\dagger}
\def\req#1{{\rm(\ref{eq:#1})}}
\newcommand{\dupdots}{\mathinner{\mkern1mu\raise\p@
    \vbox{\kern7\p@\hbox{.}}\mkern2mu
    \raise4\p@\hbox{.}\mkern2mu\raise7\p@\hbox{.}\mkern1mu}}
\newenvironment{cmatrix}{\left[\cmatrixc}{\endmatrix\right]}
\newenvironment{keywords}{
    \vspace{1em}\noindent\textbf{Keywords:}\quad
}{
    \vspace{1em}
}
\title{Addendum on data driven regularization by projection}
\author{Martin Hanke\thanks{Institute of Mathematics, Johannes
    Gutenberg-Universit\"at Mainz, 55099 Mainz, Germany
    ({\tt hanke@math.uni-mainz.de}).} 
    \and Otmar Scherzer\thanks{Faculty of Mathematics, University of Vienna, Oskar-Morgenstern-Platz 1, A-1090 Vienna, Austria \& Johann Radon Institute for Computational and Applied Mathematics (RICAM), Altenbergerstraße 69, A-4040 Linz, Austria \& Christian Doppler Laboratory for Mathematical Modeling and Simulation of Next Generations of Ultrasound Devices (MaMSi), Oskar-Morgenstern-Platz 1, A-1090 Vienna, Austria ({\tt otmar.scherzer@univie.ac.at}).} }
\begin{document}

\sloppy
\maketitle

\begin{abstract}We study the stability of \emph{regularization by projection} 
for solving linear inverse problems if the forward operator is given indirectly but specified via some input-output training pairs. 
We extend the approach in \cite{AKS20} to data pairs, which are noisy and, possibly,
linearly dependent.
\end{abstract}

\begin{keywords}
Data driven regularization, variational regularization, regularization by projection, inverse problems
\end{keywords}


Last modified: \today

\pagestyle{myheadings}
\thispagestyle{plain}
\markboth{M. HANKE AND O. SCHERZER}{DATA DRIVEN REGULARIZATION}

\section{Introduction}
Data driven regularization by projection attempts to reconstruct a solution of a linear operator equation 
	\be{op}
	   Kx=y
	\ee
without making explicit use of the operator $K$ but only of some of its input-output relations $(x_k)_k \subseteq \X$ and 
$(y_k)_k \subseteq \Y$.

In \cite{AKS20} we have considered two different strategies how the solution of \req{op}
can be computed in a data driven manner:
	\begin{enumerate}
		\item Method I - least squares projection (called \emph{regularization
        by projection} in \cite{AKS20});
		\item Method II - dual least squares method, originally suggested by Natterer \cite{Natt77}, which, however, requires additional measurements and therefore limits the practical applicability in a data driven setting.
\end{enumerate}
In addition to the methods and results from \cite{AKS20} we consider here the following extensions:
\begin{enumerate}
	\item We introduce a third method (Method~III), which employs a Bayesian approach, and -- in a way -- combines the merits of Methods~I and II.
	\item While in \cite{AKS20} the explicit assumption has been made that the training data $(y_k)_k$ satisfy
	\be{trno}
	Kx_k = y_k\,,
	\ee
we omit this simplifying assumption in this note. But we emphasize that if the training data are exact and if $K$ is injective, then Method~III reduces to Method~I.
\item We also provide a preliminary error analysis for Methods~I and III.
\item In the appendix we give some simple examples concerning the rank of 
covariance matrices. 
\end{enumerate}
%

\section{Setting}
\label{Sec:Setting}

These notes are concerned with the following setting from \cite{AKS20}.

Let $K:\X\to\Y$ be a bounded linear operator between the Hilbert spaces $\X$ 
and $\Y$, which has non-closed range $\rge{K}$. 
Assume further, that the only available information about $K$ 
is given in terms of two sequences $(x_k)_k\subset\X$ -- called \emph{images} in \cite{AKS20} -- and data
$(y_k)_k\subset\Y$ with
\be{noisy}
   \norm{y_k - Kx_k}_\Y\,\leq\, \delta\,, \qquad k=1,2,3,\dots\,,
\ee
for some noise level $\delta>0$. Compared to \cite{AKS20} the data can be noisy, and neither the images nor the data need to be linearly independent.

Let
\bdm
   \X_n \,=\, \hull\{x_1,\dots,x_n\}\,, \qquad n\in\N\,,
\edm
denote the increasing subspaces of $\X$ associated with the given images $x_k$. Further, let $P_{\X_n}$ be the orthogonal projector of 
$\X$ onto $\X_n$. In order to work with the given training pairs we first need an
approximate representation $K_n$ of $KP_{\X_n}$.

To this end define the ''matrix''
\bdm
   X_n \,=\, [x_1,\dots,x_n]\,,
\edm
i.e., the linear map from $\R^n$ to $\X$ with
\be{Xn}
   X_n\xx \,=\, \sum_{k=1}^n \xi_k x_k\,\in\X_n\,, \qquad
   \xx\,=\,\bigl[\xi_k\bigr]_{k=1}^n\in\R^n\,,
\ee
and its adjoint
\bdm
   X_n^*:x\,\mapsto\,\bigl[\,\scalp{x,x_k}_\X\,\bigr]_{k=1}^n\,.
\edm
Further, let
\be{SVD}
   X_n \,=\, U_n \Lambda \Xi^*
\ee
be the singular value decomposition of $X_n$, 
where $\Lambda$ is a diagonal $p\times p$ matrix with the positive singular
values on its diagonal ($p\leq n$ is the dimension of $\X_n$); furthermore,
$\Xi\in\R^{n\times p}$ has orthonormal columns, while $U_n$ is a linear map from $\R^p$
to $\X$ in analogy to $X_n$, and the columns $u_k$ of
\bdm
   U_n \,=\, [u_1,\dots,u_p]
\edm
provide an orthonormal basis of $\X_n$. 
It follows that every $x\in\X_n$ can be written as $U_n\xx$ for some 
$\xx\in\R^p$ with
\bdm
   \norm{x}_\X \,=\, \norm{\xx}_2\,.
\edm

To compute approximations $z_k\approx Ku_k$, $k=1,\dots,p$,
collect the given data from $\Y$ and the unknown $z_k$ in two ``matrices'' 
\be{YandZ}
   Y_n \,=\, [ y_1,\dots,y_n] \qquad \text{and} \qquad 
   Z_n \,=\, [ z_1,\dots,z_p]\,,
\ee
and denote by $e_k$ the $i$th Cartesian basis vector in $\R^n$ for 
$k=1,\dots,n$. Then, the singular value decomposition~\req{SVD} yields
\bdm
   y_k - Kx_k \,=\, (Y_n - KX_n)e_k \,=\, (Y_n-KU_n\Lambda\Xi^*)e_k\,,
\edm
and hence we obtain
\bdm
   \sum_{k=1}^n \norm{y_k-Kx_k}_\Y^2 \,=\, \norm{Y_n - KU_n\Lambda\Xi^*}_{HS}^2\,,
\edm
where $\norm{\,\cdot\,}_{HS}$ is the Hilbert-Schmidt norm 
of the corresponding operator. In view of \req{noisy} and \req{YandZ} it is 
now natural to define $Z_n$ as the minimizer of the functional
\be{Phi}
   \Phi(Z) \,=\, \norm{Y_n \,-\, Z\Lambda\Xi^*}_{HS}
\ee
over $Z\in\Y^p$. Let $\Xi'\in\R^{n\times(n-p)}$ be such that the columns of
$[\Xi\ \Xi']$ form a full orthonormal basis of $\R^n$. Then
\be{Phi2}
   \Phi^2(Z)
   \,=\,
   \norm{(Y_n - Z\Lambda\Xi^*)[\Xi\ \Xi']}_{HS}^2 \,=\,
   \norm{Y_n\Xi-Z\Lambda}_{HS}^2 \,+\, \norm{Y_n\Xi'}_{HS}^2 \,.
\ee
From this we conclude that the unique minimizer of $\Phi$ is given by
\be{Zn}
   Z \,=\, Z_n \,:=\, Y_n\Xi\Lambda^{-1}\,,
\ee
and therefore our best approximation of $K$ on $\X_n$ is given by
\be{Kn}
   K_n \,:=\, Z_nU_n^* \,=\, Y_n\Xi\Lambda^{-1}U_n^* 
\ee
as approximation of $KP_{\X_n}$, once the first $n$ training pairs $(x_k,y_k)$,
$k=1,\dots,n$, have been seen.

In view of \req{SVD} we can rewrite \req{Kn} as
\be{Kn2}
   K_n \,=\, Y_nX_n^\dagger \,=\, Y_nX_n^*(X_nX_n^*)^\dagger
\ee
in terms of the Moore-Penrose generalized inverse $X_n^\dagger$ of $X_n$, 
and we mention that this is a well-known estimator of $K$ in statistics;
cf., e.g., Crambes and Mas~\cite{CrMa13} and 
H\"ormann and Kidzi\'{n}ski~\cite{HoKi15}, and the references therein.
We further note that for exact data, i.e., when $\delta=0$ in \req{noisy}, 
then we have
\be{KnK}
   K_n \,=\, KX_n\Xi\Lambda^{-1}U_n^* \,=\, KU_n\Lambda\Xi^*\Xi\Lambda^{-1}U_n^*
   \,=\, KP_{\X_n}\,.
\ee
In the general case, however, it follows from \req{Kn}, the triangle inequality, and \req{Phi2} that
\begin{align*}
   \norm{(K_n-K)U_n\Lambda}_{HS}
   &\,=\, \norm{(Z_n-KU_n)\Lambda}_{HS} \\[1ex]
&\,\leq\, \norm{Z_n\Lambda-Y_n\Xi}_{HS} \,+\, \norm{Y_n\Xi - KU_n\Lambda}_{HS}
    \\[1ex]
   &\,\leq\, \Phi(Z_n) \,+\, \norm{Y_n\Xi - KU_n\Lambda}_{HS}\;.
\end{align*}
Again, applying \req{Phi2} to the second term in the last line we get 
$\norm{Y_n\Xi - KU_n\Lambda}_{HS} \leq \Phi(KU_n)$, and since $Z_n$ is a minimizer 
of $\Phi$ it follows from \req{SVD} and \req{noisy} that
\be{opnorm}
   \norm{(K_n-K)U_n\Lambda}_{HS}
    \,\leq\, 2\,\Phi(KU_n) \,=\, 2\,\norm{Y_n-KX_n}_{HS}
   \,\leq\, 2\sqrt{n}\,\delta\,.
\ee
Accordingly, since the singular values in $\Lambda$ 
can be much smaller than the noise level when some images $x_k$ are 
almost linearly dependent, the approximation $K_n\approx K$ on $\X_n$ 
can be arbitrarily bad in the noisy case. For example, if $u_k$ is one of the singular functions of $X_n$ and
$\sigma_k$ the associated singular value, cf.~\req{SVD}, then \req{opnorm} gives
\be{sigmak}
\norm{(K_n-K)u_k}
\,\leq\, 2\frac{\sqrt{n}\,\delta}{\sigma_k} \,.
\ee
For a remedy out of this inherent difficulty one can resort, e.g., to using
a truncated singular value decomposition of $X_n$ in \req{Kn2},
cf.~\cite{CrMa13,HoKi15}.

Assume next that noisy data 
\be{eta}
   \yd \,=\ K\xdagger \,+\, \eta
\ee
be given for some $\xdagger\in\X$ and some noise term $\eta$. 
Even if $K$ were known exactly,
the problem of determining stable approximations of $\xdagger$ 
is known to be ill-posed, and we therefore want to employ some kind of
regularization by projection.
In \cite{AKS20} two classical approaches of this sort, investigated in detail
in \cite[Section~3.3]{EHN96}, have been discussed for the above setting.

Method I, known as least-squares projection, defines
\be{lsp}
   \xlsp \,=\, (KP_{\X_n})^\dagger \yd\,, 
\ee
respectively its approximation 
\bdm
   \xlsp \,=\, K_n^\dagger \yd 
\edm
(which is exact according to \req{KnK} in the case of noise-free data, provided that $\xdagger\in \X_n$)
in terms of $K_n$ of \req{Kn}
, where the Moore-Penrose generalized inverse $K_n^\dagger$ of $K_n$ is 
typically computed via the singular value decomposition of $K_n$.

The shortcoming of Method I is the fact that it is known 
(Seidman~\cite{Seid80}) that even for exact data pairs, 
i.e., for $\delta=0$ and $\eta=0$, 
the least-squares projections $\xlsp$ may fail to converge to $\xdagger$ for 
$n\to\infty$. On the positive side, sufficient conditions for convergence
have been stated in \cite{AKS20}.

In general the recommended way of regularizing by projection is Method II, 
namely the dual least-squares method proposed by Natterer~\cite{Natt77}.
In contrast to Method~I it starts with a sequence of nested subspaces 
$\Y_n\subset\Y$. These can be $\Y_n=\hull\{y_1,\dots,y_n\}$ 
(as suggested in \cite{AKS20}),
but it may be preferable to think of some other subspaces with suitable
approximation properties. Whatever the choice, Method II determines the 
solution $\xdls$ of minimal norm of the problem
\be{dls}
   \text{minimize} \ \norm{P_{\Y_n}\!Kx-w_n}_\Y\,, \qquad w_n=P_{\Y_n}\yd\,,
\ee
where $P_{\Y_n}$ is the orthogonal projection of $\Y$ onto $\Y_n$.
As pointed out in \cite{AKS20}, however, in our setting
$P_{\Y_n}\!K$ is unknown on $\X\setminus\X_n$, and so is its null space, 
and therefore the computation of the 
solution of \req{dls} \emph{of minimal norm} is not possible 
from the given data.

\section{A Bayesian framework}
\label{Sec:Bayes}
In practice the given sequences $(x_k)_k$ and $(y_k)_k$ will often correspond
to a given set of training data, where the $x_k$ are ``plausible images'' 
and $y_k$ are the associated data. In a Bayesian
framework one would argue that $(x_k)_k$ consists of samples from
a given probability distribution on $\X$. In the simplest case this
distribution is Gaussian, i.e.,
\be{Gaussian}
   x \,\sim\, {\cal N}(\mean,\Gamma)
\ee
with mean $\mean\in\X$ and covariance operator $\Gamma\in{\cal L}(\X)$,
the latter being a self-adjoint positive semidefinite compact operator of 
trace class. In this framework it is recommended to replace the 
space $\X$ by the affine subspace $\mean+\E$, where $\E$ is the Cameron-Martin space $\E=\rge{\Gamma^{1/2}}\subset\X$, equipped with the inner product
\bdm
   \scalp{u,u'}_\E \,=\, \scalp{u,\Gamma^{-1}u'}_\X\,, \qquad u,u'\in\E\,,
\edm
cf.~Stuart~\cite[Section~6.3]{Stua10}, and to approximate $\xdagger\in\mean+\E$, 
by minimizing the Tikhonov functional
\be{MAP}
   \text{minimize} \ \norm{Kx-\yd}^2_\Y \,+\, \alpha \norm{x-\mean}^2_\E\,, 
\ee
over $x\in\mean+\E$,
where the regularization parameter $\alpha$ depends on the noise level 
in the data according to \req{noisy} and \req{eta}. Note that if $K$ is a Hilbert-Schmidt operator then
the minimizer of \req{MAP} is the 
\emph{maximum a posteriori estimation} (MAP) of $\xdagger$ under a Gaussian
white noise model for $\eta$ and the assumed distribution of the images;
see \cite{Stua10} or Kaipio and Somersalo~\cite{KaSo05} 
for further details. 

Without any further information on the probability distribution in $\X$
it is natual to use the sample mean and sample covariances
\be{cov}
   \mean_n \,:=\, \frac{1}{n}\,\sum_{k=1}^n x_k \qquad \text{and} \qquad
   \Gamma_n \,:=\, \frac{1}{n} \,\sum_{k=1}^n (x_k-\mean_n)(x_k-\mean_n)^*
\ee
as approximations of $\mean$ and $\Gamma$,
respectively, where for $x\in\X$ we let $x^*$ from the dual space of $\X$ 
be given by $x^*v \,=\, \scalp{x,v}_\X$ for $v\in\X$. 
In other words, one makes the assumption that
\bdm
   x \,\sim\, {\cal N}(\mean_n,\Gamma_n)\,,
\edm 
and defines the Cameron-Martin space $\E_n$ accordingly.
Note that if $p=\dim\X_n$, then $\Gamma_n$ is a positive semidefinite operator 
with
\be{rankGamma}
   p-1 \,\leq\, p' \,:=\, \operatorname{rank}\Gamma_n \,\leq\,\min\{p,n-1\}\,,
\ee
cf.\ the appendix, and $\mean_n+\E_n$ is a $p'$-dimensional affine 
subspace of $\X_n$: either this is $\X_n$ itself, or it is an affine subspace
of $\X_n$ of codimension one -- depending on whether $p'=p$ or $p-1$, 
respectively; compare Figure~\ref{Fig1}. In particular, intuitively speaking, 
every $x\in\X\setminus(\mean+\E_n)$ has ``infinite'' $\E_n$-norm. 

\begin{figure}
\label{Fig1}
\centerline{\includegraphics{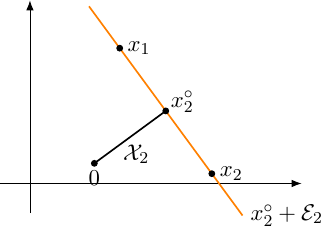}}
\caption{A sketch of the sets $\X_n$ and $\E_n$ for $p=n=2$ and $p'=1$.}
\end{figure}

Accordingly, in this Bayesian framework the dual least-squares method takes 
the following form: Let 
\bdm
   \ymean_n \,=\, \frac{1}{n}\sum_{k=1}^n y_k \,,
\edm
which is close to $K\mean_n$ by virtue of \req{noisy},
and determine the element $u\in\E_n$ of minimal $\E_n$-norm, which solves
\be{Bayes}
   \text{minimize} \ \norm{P_{\Y_n}K_nu - w_n}_\Y\,, \qquad
   w_n \,=\, P_{\Y_n}(\yd-\ymean_n)\,.
\ee
Then return $\mean_n+u$ as approximate solution of the inverse problem.
Note that \req{Bayes} is similar to \req{dls}; it only differs in the definition
of $K_n$ and the use of centered data. This approximation still depends on the choice of $\Y_n$;
we refer to it as Method~III, if
\bdm
   \Y_n \,=\, \hull\{y_1-\ymean_n,\dots,y_n-\ymean_n\}\,,
\edm
and denote the corresponding approximation as $\xbayes$.

The numerical computation of $\xbayes$ is similar to
the computation of $\xlsp$ in Section~\ref{Sec:Setting}, and for this
reason we adopt the corresponding notation,
although the matrices are different from those in Section~\ref{Sec:Setting}. 
\begin{enumerate}
\item
Let the columns of $\Mean_n = [\mean_n,\dots,\mean_n] \in\X_n^n$ 
consist of $n$ identical copies of the sample mean, 
and compute the singular value decomposition
\bdm
   X_n-\Mean_n \,=\, U_n\Lambda \Xi^* \quad \text{with} \quad U_n \in \E_n^{p'},\; \Lambda \in \R^{p' \times p'},\; \Xi^* \in \R^{p' \times n}
\edm
of $X_n-\Mean_n$. 
Then the columns of $U_n=[u_1,\dots,u_{p'}]$ contain the
eigenvectors of the covariance operator $\Gamma_n$ defined in \req{cov} associated with its positive eigenvalues $\lambda_k$, 
the square roots of which are sitting on the diagonal of 
$\Lambda\in\R^{p'\times p'}$ multiplied by $\sqrt{n}$.
This implies that if $\mean_n=0$ then $\sigma_k=\sqrt{n\lambda_k}$ is the singular value
which appears in \req{sigmak}.

Note that the columns of $\Xi$ form an orthonormal basis of the orthogonal
complement of the null space of $X_n-\Mean_n$, and hence, they are all
orthogonal to the vector of all ones, because
\bdm  
   \sum_{k=1}^{n} (x_k-x_n^\circ) \,=\, \sum_{k=1}^{n} x_k \,-\, n x_n^\circ
    \,=\, 0\,.
\edm

With this singular value decomposition it follows that
\bdm
   \norm{u}_{\E_n}^2 \,=\, \scalp{u,\Gamma_n^{-1}u}_\X \,=\,
   \begin{cases}
      n\,\norm{\Lambda^{-1}U_n^*u}_2^2\,, & 
          u\in\E_n=\hull\{u_1,\dots,u_{p'}\}\,, \\[1ex]
      +\infty\,, & \text{else}\,,
   \end{cases}
\edm
i.e.,
\be{J}
   J \,:\, \E_n \,\to\, \R^{p'}\,, \qquad 
   u \,\mapsto\,\xx \,=\, \sqrt{n}\,\Lambda^{-1}U_n^*u\,,
\ee
is an isometry. 
\item
Similar to Section~\ref{Sec:Setting} -- compare with \req{Zn} and \req{Kn} -- we consider now centered data $y_i-\ymean_n$ as input, let
\bdm
   Z_n \,=\, [y_1-\ymean_n,\dots,y_n-\ymean_n]\, \Xi\Lambda^{-1} \,,
\edm
and define
\bdm
   K_n \,=\, Z_nU_n^* \,\approx\, KP_{\E_n}\,.
\edm
Note that here $K_n$ is approximating the restriction of $K$ to the Cameron-Martin space, while in \req{Kn} it has been the restriction of $K$ to $\X_n$.

One advantage of this framework is that here we can avoid computing $K_n$ or $Z_n$ 
and rather evaluate
\bdm
   L_n \,=\, [y_1-\ymean_n,\dots,y_n-\ymean_n]\, \Xi
\edm
instead. Take note that by virtue of \req{J}, and because $U_n$ is orthogonal,
\be{LnKn}
   L_n \,=\, Z_n\Lambda \,=\, K_n U_n\Lambda \,=\, \sqrt{n}\,K_n J^{-1}\,.
\ee
\item
Accordingly, in the noisy case the least squares problem~\req{Bayes} becomes
\bdm
   \text{minimize} \ \norm{K_nu - w_n}_\Y \,=\, \norm{K_nJ^{-1}\xx - w_n}_\Y\,, 
   \qquad
   w_n \,=\, P_{\Y_n}(\yd-\ymean_n)\,,
\edm
over $u\in\E_n$, respectively $\xx\in\R^{p'}$, in such a way that $\xx=Ju$ 
has minimal Euclidean norm. 
The solution of this minimization problem is
given by $\xx = L_n^\dagger(\sqrt{n}\,w_n)$, and the corresponding approximation 
of $\xdagger$ is
\be{xnIII}
   \xbayes \,=\, \mean_n \,+\, U_n\Lambda L_n^\dagger(\yd - \ymean_n)\,.
\ee
The computation of $\xbayes$ requires the singular value decomposition 
of $L_n$. 

If we want to compute the MAP estimate~\req{MAP} of $\xdagger$ 
instead, then we have to replace $L_n^\dagger$ by $(L_n^*L_n+\alpha I)^{-1}L_n^*$
in \req{xnIII}. Again, the evaluation of this estimate simplifies when the
singular value decomposition of $L_n$ is computed first.
\end{enumerate} 

It is a nice feature of Method~III that it circumvents the inversion of the
singular values in $\Lambda$ which was necessary in the evaluation
of $K_n$ in \req{Kn} for computing $\xlsp$. Thus the computation of $\xbayes$
is more stable in the presence of clustered images.

\section{A basic error estimate}
\label{Sec:error}
\begin{figure}
\label{Fig2}
\centerline{\includegraphics{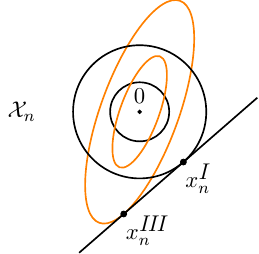}}
\caption{Different minimum norm solutions according to
the norm in $\X$ (Method I) and the Cameron-Martin norm (Method III), when $\mean_n=0$ and $\ymean_n=0$. The solid line shows the intersection of the solution manifold of $K_nx=P_{\Y_n}y$ with $\X_n$; the circles are the level lines of the norm in $\X$ and
the ellipses correspond to the Cameron-Martin norm.}
\end{figure}

Note that when $\mean_n=0$ and $\ymean_n=0$ then 
$\E_n=\X_n$, but their norms are different,
and Method III is the least-squares
projection method using the Cameron-Martin norm.
If $K_n|_{\X_n}$ is injective, then Method I and
Method III coincide, cf.~Lemma~\ref{Lem:MoorePenrose} 
below: they both determine the unique
solution of $Kx=P_{\Y_n}y$ in $\X_n$. 
In particular, this implies that the aforementioned Seidman example 
is also relevant for Method~III.
If $K_n|_{\X_n}$ fails to be injective then the
methods are different, because they return the 
respective solutions of minimal norm; 
see Figure~\ref{Fig2}.

Finally, the reason that the connection to the dual least squares method is somewhat
weaker, is due to the fact that $\mean_n+\E_n$ varies 
with $n$, and does not belong to the orthogonal complement of the null space of $K$,
in general.

\begin{lemma}
\label{Lem:MoorePenrose}
Let $U_n$, $\Lambda$, $K_n$ and $L_n$ be defined as in 
Section~\ref{Sec:Bayes}, and let $K_n|_{\E_n}$ be injective.
Then $K_n^\dagger=U_n\Lambda L_n^\dagger$.
\end{lemma}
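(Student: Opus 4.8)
The plan is to unwind the definitions from Section~\ref{Sec:Bayes} and reduce the identity $K_n^\dagger = U_n\Lambda L_n^\dagger$ to a statement about the Moore--Penrose inverse of a composition with an invertible factor. Recall from \req{LnKn} that $L_n = \sqrt{n}\,K_n J^{-1}$, where $J:\E_n\to\R^{p'}$ is the isometry of \req{J}; since $J$ is a bijective isometry, so is $J^{-1}$, and in particular $(J^{-1})^\dagger = J^{-1,-1} = J$ and $(\sqrt{n})^\dagger = 1/\sqrt{n}$. First I would use the standard fact that if $A$ is a bounded operator with closed range and $Q$ is boundedly invertible, then $(AQ)^\dagger = Q^{-1}A^\dagger$ whenever $Q^{-1}$ maps onto the relevant domain; here, because $K_n$ restricted to $\E_n$ is injective and $\E_n$ is finite-dimensional, $K_n|_{\E_n}$ has closed range and $K_n^\dagger$ is the honest left inverse of $K_n|_{\E_n}$ composed with the orthogonal projection onto $\rge(K_n)$. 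Applying this with $A = K_n$ and $Q = J^{-1}/\sqrt{n}$ gives $L_n^\dagger = \sqrt{n}\,J\,K_n^\dagger$, hence $K_n^\dagger = \tfrac{1}{\sqrt{n}}\,J^{-1}L_n^\dagger = \tfrac{1}{\sqrt{n}}\,U_n\Lambda\cdot\tfrac{1}{\sqrt{n}}\cdot\sqrt{n}\,L_n^\dagger$; tracking the constants via \req{J}, where $J^{-1}\xx = \tfrac{1}{\sqrt n}U_n\Lambda\xx$, the factors of $\sqrt n$ cancel and one lands exactly on $K_n^\dagger = U_n\Lambda L_n^\dagger$.

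The key steps, in order, are: (i) record that $J$ is a bijective isometry from $\E_n$ onto $\R^{p'}$, so $J^{-1}$ is bounded with bounded inverse; (ii) invoke injectivity of $K_n|_{\E_n}$ together with $\dim\E_n = p' < \infty$ to conclude $\rge(K_n)$ is closed and $K_n^\dagger$ is characterised by $K_n^\dagger K_n = \Id_{\E_n}$ and $K_n^\dagger = 0$ on $\rge(K_n)^\perp$; (iii) apply the composition rule for the pseudoinverse, $(K_n J^{-1})^\dagger = J\,K_n^\dagger$, which is legitimate precisely because $J^{-1}$ is a bijection onto $\E_n$ (no range/kernel mismatch can occur); (iv) substitute $L_n = \sqrt n\,K_n J^{-1}$ from \req{LnKn} and solve for $K_n^\dagger$, using $J^{-1} = \tfrac{1}{\sqrt n}U_n\Lambda(\,\cdot\,)$ from \req{J} to simplify. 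An alternative, perhaps cleaner, route is to verify the four Moore--Penrose equations directly for the candidate $U_n\Lambda L_n^\dagger$: check that $U_n\Lambda L_n^\dagger K_n$ is the identity on $\E_n$ (using $L_n^\dagger L_n = \Id$, which holds since $L_n$ is injective, being $\sqrt n\,K_n J^{-1}$ with both factors injective), that $K_n U_n\Lambda L_n^\dagger = L_n L_n^\dagger$ is the orthogonal projector onto $\rge(L_n) = \rge(K_n)$, and that both composites are self-adjoint; the Moore--Penrose axioms then force $U_n\Lambda L_n^\dagger = K_n^\dagger$.

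The main obstacle is purely bookkeeping rather than conceptual: one must be careful that $L_n$ is a map from $\R^n$ (via the $\Xi$-columns, effectively $\R^{p'}$) to $\Y$, whereas $K_n$ is a map from $\E_n$ (or $\X$) to $\Y$, so the pseudoinverses live on different spaces and the isometry $J$ is exactly what reconciles them; getting the $\sqrt n$ factors and the direction of $J$ versus $J^{-1}$ right in \req{LnKn} and \req{J} is where a sign-of-exponent slip would creep in. There is also a small point to dispatch, namely that $K_n|_{\E_n}$ injective implies $L_n$ injective (immediate from $L_n = \sqrt n\,K_n J^{-1}$ and $J^{-1}$ bijective), which is what makes $L_n^\dagger L_n = \Id_{\R^{p'}}$ and legitimises treating $L_n^\dagger$ as a genuine left inverse. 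Once these are in place the computation is a few lines.
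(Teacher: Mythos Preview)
Your proposal is correct, and in fact the alternative route you sketch at the end---verifying the four Moore--Penrose axioms directly for the candidate $A=U_n\Lambda L_n^\dagger$---is exactly the argument the paper gives: it computes $K_nA=L_nL_n^\dagger=P_{\rge{K_n}}$ (using $\rge{L_n}=\rge{K_n}$), deduces $AK_nA=A$ and $K_nAK_n=K_n$ from this, and then uses the injectivity of $K_n|_{\E_n}$ (equivalently $L_n^\dagger L_n=I$) only for the remaining identity $AK_n=P_{\E_n}$.

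Your primary route via a composition rule for the pseudoinverse is a legitimate shortcut, but be careful how you state the rule: the implication ``$Q$ boundedly invertible $\Rightarrow (AQ)^\dagger=Q^{-1}A^\dagger$'' is \emph{false} in general (take $A=[1\ 1]$, $Q=\operatorname{diag}(1,2)$), and ``closed range'' alone does not rescue it. What you actually need---and have---is that $A=K_n|_{\E_n}$ is \emph{injective}; then $A^\dagger=(A^*A)^{-1}A^*$ and the identity $(AQ)^\dagger=Q^{-1}A^\dagger$ follows by a one-line computation. With that correction (and after straightening out the $\sqrt{n}$ bookkeeping you flagged: the right intermediate is $L_n^\dagger=\tfrac{1}{\sqrt{n}}\,JK_n^\dagger$, whence $K_n^\dagger=\sqrt{n}\,J^{-1}L_n^\dagger=U_n\Lambda L_n^\dagger$), your argument goes through in two lines. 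The paper's verification is slightly longer but has the merit of isolating precisely which of the four Moore--Penrose identities requires the injectivity hypothesis.
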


\begin{proof}
We show that $A=U_n\Lambda L_n^\dagger$ satisfies all four defining equations 
for the Moore-Penrose generalized inverse, listed, e.g., 
in \cite[Proposition~2.3]{EHN96}. 
We start by noting that it follows from \req{LnKn} that
\be{KnA}
   K_nA \,=\, K_nU_n\Lambda L_n^\dagger \,=\, L_nL_n^\dagger 
        \,=\, P_{\rge{L_n}} \,=\, P_{\rge{K_n}}\,,
\ee
because the range spaces $\rge{L_n}$ and $\rge{K_n}$ of $L_n$ and $K_n$ 
coincide. Accordingly, we have 
\bdm
   AK_nA \,=\, U_n\Lambda L_n^\dagger P_{\rge{L_n}}
   \,=\, U_n\Lambda L_n^\dagger \,=\, A\,,
\edm
and
\bdm
   K_nAK_n \,=\, P_{\rge{K_n}}K_n \,=\, K_n\,.
\edm

Note that these three identities hold independent of whether $K_n|_{\E_n}$
is injective, or not. We only need this assumption to prove the remaining
identity. In fact, due to this assumption the range of $L_n$ has dimension
$p'$, and hence, $L_n^\dagger L_n$ is the identity in $\R^{p'}$. 
It therefore follows from \req{LnKn} and \req{J} that
\bdm
   AK_n|_{\E_n} \,=\, \frac{1}{\sqrt{n}}\, U_n\Lambda L_n^\dagger L_n J
   \,=\, \frac{1}{\sqrt{n}}\, U_n\Lambda J \,=\, J^{-1}J 
\edm
is the identity in $\E_n$, whereas $AK_nx\,=\, 0$ for every $x\in\E_n^\bot$.
This establishes that
\bdm
   AK_n\,=\,P_{\E_n}\,.
\edm

Accordingly, $A$ is the Moore-Penrose generalized inverse of $K_n$.
\end{proof}


The assumption that $K_n|_{\E_n}$ is injective is not a very restrictive one.
For example, this is likely to be the case when $K:\X\to\Y$ is known to be 
injective. Also, if the range of $[y_1-\ymean_n,\dots,y_n-\ymean_n]$ has
the largest possible dimension $n-1$, so that its null space is spanned by 
$[1,\dots,1]^T$, then the dimension of $\rge{L_n}=\rge{K_n}$ is $p'$,
and again, $K_n|_{\E_n}$ is injective.
This second case occurs with high probability if the noise components in 
the data $y_k$ are uncorrelated random variables.

\begin{proposition}
\label{Prop:Thm3.24}
Let $\yd=K\xdagger$ for some $\xdagger\in\X$, and assume that $K_n|_{\E_n}$
is injective. Furthermore, let $\mu_n$ be the smallest singular value of $K_n$.
Then
\be{xbayes-error}
   \xbayes-x^\dagger 
   \,=\, (I-P_{\E_n})(\mean_n-x^\dagger) \,+\, d_n\,,
\ee
where 
\bdm
   \norm{d_n}_\X \,\leq\, \frac{1}{\mu_n}\,
   \Bigl(\norm{(K-K_n)(\mean_n-\xdagger)}_\Y 
         \,+\, \norm{\ymean_n - K\mean_n)}_\Y\Bigr)\,.
\edm 
\end{proposition}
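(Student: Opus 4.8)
The plan is to start from the closed-form expression \req{xnIII} for $\xbayes$ and rewrite it using Lemma~\ref{Lem:MoorePenrose}, which under the injectivity assumption gives $U_n\Lambda L_n^\dagger=K_n^\dagger$, so that
\bdm
   \xbayes \,=\, \mean_n \,+\, K_n^\dagger(\yd-\ymean_n)\,.
\edm
Since $\yd=K\xdagger$, I would write $\yd-\ymean_n = K(\xdagger-\mean_n) + (K\mean_n-\ymean_n)$, and then split the correction term $K(\xdagger-\mean_n)$ further as $K_n(\xdagger-\mean_n)+(K-K_n)(\xdagger-\mean_n)$. Feeding this into $K_n^\dagger$ and using $K_n^\dagger K_n = P_{\E_n}$ (again from Lemma~\ref{Lem:MoorePenrose}, which holds precisely because $K_n|_{\E_n}$ is injective), the ``main'' piece becomes $P_{\E_n}(\xdagger-\mean_n)$, so that
\bdm
   \xbayes - \xdagger \,=\, (\mean_n-\xdagger) + P_{\E_n}(\xdagger-\mean_n) + d_n \,=\, (I-P_{\E_n})(\mean_n-\xdagger) + d_n\,,
\edm
with $d_n := K_n^\dagger\bigl((K-K_n)(\xdagger-\mean_n)\bigr) + K_n^\dagger(K\mean_n-\ymean_n)$. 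This is exactly the decomposition \req{xbayes-error} (noting $(K-K_n)(\xdagger-\mean_n)=-(K-K_n)(\mean_n-\xdagger)$, which does not affect the norm bound).

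For the norm estimate on $d_n$, I would use the standard fact that the operator norm of the Moore–Penrose inverse equals the reciprocal of the smallest nonzero singular value: $\norm{K_n^\dagger} = 1/\mu_n$, where $\mu_n$ is the smallest singular value of $K_n$ as in the statement. Applying this together with the triangle inequality to the two-term expression for $d_n$ immediately yields
\bdm
   \norm{d_n}_\X \,\leq\, \frac{1}{\mu_n}\Bigl(\norm{(K-K_n)(\mean_n-\xdagger)}_\Y + \norm{\ymean_n-K\mean_n}_\Y\Bigr)\,,
\edm
as claimed. One small technical point to address is that $K_n^\dagger$ is applied to vectors in $\Y$ that need not lie in $\rge{K_n}$; this is harmless because $\norm{K_n^\dagger v}\leq\norm{K_n^\dagger}\,\norm{v}$ holds for all $v$ (the projection onto $\rge{K_n}$ only decreases the norm), so the bound goes through without first projecting.

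I do not expect any genuine obstacle here: the proof is essentially algebraic once Lemma~\ref{Lem:MoorePenrose} is in hand, and the only thing to be careful about is the bookkeeping of signs and the fact that $\xbayes-\mean_n$ automatically lies in $\E_n$ (being in the range of $U_n$), so that the residual term $(I-P_{\E_n})(\mean_n-\xdagger)$ captures exactly the unavoidable component of the error lying outside the current Cameron–Martin space. The mildly delicate step, if any, is making sure that the identity $K_n^\dagger K_n = P_{\E_n}$ is invoked with the correct domain — it is the content of the last displayed identity in the proof of Lemma~\ref{Lem:MoorePenrose} — and that the injectivity hypothesis is what makes $\mu_n>0$, so that $1/\mu_n$ is finite and the bound is meaningful.
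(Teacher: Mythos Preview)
Your proposal is correct and follows essentially the same route as the paper: start from \req{xnIII}, invoke Lemma~\ref{Lem:MoorePenrose} to replace $U_n\Lambda L_n^\dagger$ by $K_n^\dagger$, split $\yd-\ymean_n$ into a $K_n(\xdagger-\mean_n)$ piece plus a remainder $r_n$, use $K_n^\dagger K_n=P_{\E_n}$, and finish with $\norm{K_n^\dagger}=1/\mu_n$. The paper packages the remainder as a single $r_n$ and sets $d_n=K_n^\dagger r_n$, but this is identical to your two-term expression for $d_n$.
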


\begin{proof}
According to \req{xnIII} and Lemma~\ref{Lem:MoorePenrose} there holds
\bdm
   \xbayes - \mean_n \,=\, K_n^\dagger K_n(\xdagger-\mean_n)
   \,+\, K_n^\dagger r_n\,,
\edm
where
\be{rn}
   r_n \,=\, \yd - \ymean_n - K_n(\xdagger - \mean_n)
       \,=\, (K-K_n)(\xdagger-\mean_n) \,+\, K\mean_n - \ymean_n\,.
\ee
Since $K_n^\dagger K_n=P_{\E_n}$ because of the injectivity of $K_n|_{\E_n}$,
we thus obtain
\bdm
   \xbayes \,=\, P_{\E_n}\xdagger \,+\, (I-P_{\E_n})\mean_n \,+\, d_n
\edm
with 
\be{dn}
   d_n \,=\, K_n^\dagger r_n\,.
\ee
From this we readily conclude the error representation~\req{xbayes-error},
where the estimate of $\norm{d_n}_\X$ follows from \req{rn} and the fact, that
$\norm{K_n^\dagger}_{\Y\to\X}=1/\mu_n$.
\end{proof}

The error representation~\req{xbayes-error} with $d_n$ of \req{dn} extends
the corresponding formula for the least-squares projection as stated, e.g.,
by Kaltenbacher and Offtermatt~\cite[(2.9)]{KaOf12}.



\appendix
\section{On the rank of the sample covariance matrix}
\label{Sec:Appendix}
Here we discuss the sharpness of the inequalities in \req{rankGamma}
concerning the rank $p'$ of the sample covariance operator $\Gamma_n$
and the dimension $p$ of $\X_n$.

Let $u\in\X$ be any vector. Then it follows that
\bdm 
   \scalp{u,\Gamma_n u}_\X \,=\, 
   \frac{1}{n}\sum_{k=1}^n \bigl|\scalp{x_k-\mean_n,u}_\X\bigr|^2 \,\geq\, 0\,,
\edm
with equality if and only if $u\bot\hull\{x_k-\mean_n\}$. This means
that 
\bdm
   p' \,=\, \dim \bigl(\hull \{x_k-\mean_n\}\bigr)\,.
\edm
Since this span is a subset of $\X_n$ we obviously have $p'\leq p$. 
Further, the set $\{x_k-\mean_n\}$ is always linearly dependent, 
because these vectors add up to zero; hence $p'\leq n-1$. 
On the other hand, since
\bdm
   \hull\bigl(\{x_k-\mean_n\} \,\cup\,\{\mean_n\}\bigr)
\edm
is the entire subspace $\X_n$, we necessarily have $p'\geq p-1$.

In particular, if $p=n$, then $p'=n-1$. 

\begin{example}
\label{Ex1}
\rm
Let
\bdm
   x_1 \,=\, \begin{cmatrix} 1 \\ 0 \\ 0 \end{cmatrix}\,, \quad
   x_2 \,=\, \begin{cmatrix} 0 \\ 1 \\ 0 \end{cmatrix}\,, \quad 
   \text{and} \quad x_3 = 5x_2-4x_1\,.
\edm
In this example we have $p=2$ and $\mean_3=2x_2-x_1$. 
It follows that $x_1-\mean_3 \,=\, 2(x_1-x_2)$,
$x_2-\mean_3 \,=\, x_1-x_2$, and $x_3-\mean_3 \,=\, 3(x_2-x_1)$ are all
collinear, hence $p'=1=p-1$.
This example is similar to the one which is displayed in Figure~\ref{Fig1}, except that here all the three points $x_k$ are lying in the same one-dimensional affine subspace
\fin
\end{example}

Note that we always have $p'=p$, when the sample mean vanishes. But this is
not the only possibility:

\begin{example}
\label{Ex2}
\rm
Let
\bdm
   x_1 \,=\, \begin{cmatrix} 1 \\ 0 \\ 0 \end{cmatrix}\,, \quad
   x_2 \,=\, \begin{cmatrix} 0 \\ 1 \\ 0 \end{cmatrix}\,, \quad 
   \text{and} \quad x_3 = 2(x_2+x_1)\,.
\edm
In this example we have $p=2$ again. This time $\mean_3=x_2+x_1$, so that 
$x_1-\mean_3 \,=\, -x_2$ and $x_2-\mean_3 = -x_1$
are linearly independent. Accordingly, in this example we have $p'=p=2$.
\fin
\end{example}

\subsection*{Acknowledgements}
%
%
This research was funded in whole, or in part, by the Austrian Science Fund
(FWF) 10.55776/P34981 -- New Inverse Problems of Super-Resolved Microscopy (NIPSUM).
For the purpose of open access, the author has applied a CC BY public copyright
license to any Author Accepted Manuscript version arising from this submission.
This research was funded in whole or in part by the Austrian Science Fund (FWF)
SFB 10.55776/F68 ``Tomography Across the Scales'', project F6807-N36
(Tomography with Uncertainties). For open access purposes, the author has
applied a CC BY public copyright license to any author-accepted manuscript
version arising from this submission.
The financial support by the Austrian Federal Ministry for Digital and Economic
Affairs, the National Foundation for Research, Technology and Development and the Christian Doppler
Research Association is gratefully acknowledged.



\begin{thebibliography}{99}
\bibitem{AKS20}
{\sc A.~Aspri, Y.~Korolev, and O.~Scherzer},
Data driven regularization by projection,
\emph{Inverse Problems}~\textbf{36} (2020) 125009 (35pp).


\bibitem{CrMa13}
{\sc C.~Crambes and A.~Mas},
Asymptotics of prediction in functional linear regression with functional
outputs,
\emph{Bernoulli}~\textbf{19} (2013) pp.~2627--2651.

\bibitem{EHN96}
{\sc H.W.~Engl, M.~Hanke, and A.~Neubauer},
\emph{Regularization of Inverse Problems},
Kluwer, Dordrecht, 1996.

\bibitem{HoKi15}
{\sc S.~H\"ormann and L.~Kidzi\'{n}ski},
A note on estimation in Hilbertian linear models,
\emph{Scand.\ J.\ Stat.}~\textbf{42} (2015) pp.~43--62.

\bibitem{KaSo05}
{\sc J.~Kaipio and E.~Somersalo},
\emph{Statistical and Computational Inverse Problems},
Springer, New York, 2005.

\bibitem{KaOf12}
{\sc B.~Kaltenbacher and J.~Offtermatt},
A convergence analysis of regularization by discretization in preimage space,
\emph{Math. Comp.}~\textbf{81} (2012), pp.~2049--2069.

\bibitem{Natt77}
{\sc F.~Natterer},
Regularisierung schlecht gestellter Probleme durch Projektionsverfahren,
\emph{Numer.\ Math.}~\textbf{28} (1977), pp.~329--341.

\bibitem{Seid80}
{\sc T.I.~Seidman},
Nonconvergence results for the application of least-squares estimation to
ill-posed problems,
\emph{J.\ Optim.\ Theory Appl.}~\textbf{30} (1980), pp.~535--547.

\bibitem{Stua10}
{\sc A.M.~Stuart},
Inverse problems: A Bayesian perspective,
\emph{Acta Numerica}~\textbf{19} (2010), pp.~451--559.

\end{thebibliography}
\end{document}